\documentclass[article,11pt,leqno]{article}
\usepackage{articlemacro}



\newcommand{\DM}{\mathrm{DM}}

\newcommand{\PreStk}{\mathrm{PreStk}}

\newcommand{\Sht}{\mathrm{Sht}}
\newcommand{\GSht}{\mathrm{Sht}_{G}}
\newcommand{\nSht}[1]{#1\textup{-}\Sht}

\newcommand{\Disp}{\mathrm{Disp}}
\newcommand{\quot}[1]{\left[#1\right]}

\newcommand{\Zip}[1]{#1\textup{-Zip}}

\date{}
\title{Truncations and the Motive of the Stack of Local $G$-Shtukas}
\author{Can Yaylali}

\begin{document}
\maketitle

\begin{abstract}
	We compute the rational motive of the stack of local $G$-shtukas, for a split reductive group $G$, representing compactly supported cohomology in terms of the motive of the stack of $G$-zips. This result makes explicit use of the truncated version of local $G$-shtukas established by Viehmann-Wedhorn and the theoretical background on the $*$- and $!$-adjunction for pro-systems of algebraic objects established by the author.
\end{abstract}

\thispagestyle{empty}
\tableofcontents

\section*{Introduction}
Shtukas occupy a central position in the Langlands program for function fields: their moduli spaces provide geometric realizations of automorphic forms and were used by L. Lafforgue to establish the global Langlands correspondence for $\GL_n$.  Their \emph{local} analogues were introduced by Hartl–Viehmann \cite{HV} play an important role in the proof of the tame categorical local Langlands correspondence  by Xinwen Zhu \cite{Zhu}.  In a parallel but complementary direction, Richarz–Scholbach have established a motivic form of the geometric Satake equivalence with $\QQ$-coefficients, providing a motivic incarnation of one of the key structures in the Langlands program \cite{RS1}. 

\medskip

In this article we study the \emph{rational motive} of the stack of local $G$-shtukas, where $G$ is a split reductive group. Our goal is to describe its motivic cohomology with compact support in purely motivic terms and, in particular, to relate it to the finite-type stacks of $G$-zips that appear in its stratification. The resulting comparison shows that
the cohomological complexity of the stack of local $G$-shtukas is completely captured by its truncated models.

\medskip

Local $G$-shtukas, as introduced by Hartl–Viehmann, are defined over $\FF_p$ and may be viewed as function-field analogues of $p$-divisible groups.  Their moduli stacks are quotients of ind-schemes by pro-algebraic groups and highly infinite-dimensional, but recent work of Viehmann–Wedhorn has shown that they can be  pproximated by truncated versions whose strata are described by stacks of $G$-zips \cite{VW}.  Building on this insight, we develop a motivic framework in which these approximations can be compared directly on the level of rational motives. 

\medskip 

The key input is a localization sequence for motives on quotients of ind-schemes by pro-algebraic groups with split pro-unipotent kernel. Using recent advances in the theory of motives on ind- and pro-Artin stacks, we establish the existence of a $\ast$-adjunction and compact-object-preserving pushforward for such quotients, together with a localization  formula.  This technical step allows us to compute motivic cohomology with compact support in families of local shtukas.

\begin{intro-theorem}[\protect{\ref{thm.main}}]
\label{intro-thm}
	Let $G$ be a split reductive group and $T$ a split maximal torus contained in $G$. Let $\mu\in X_{*}(T)_{+}$ be a dominant cocharacter and let $\GSht^{\leq\mu}$ denote the closure\footnote{See Section \ref{sec-GSht} for the notation.} of the stack of local $G$-shtukas of type $\mu$. Then 
	\[
		M^{c}(\GSht^{\leq\mu}) \simeq \bigoplus_{\mu'\leq \mu} M^{c}(\Zip{G}^{\mu'})
	\]
	in $\DM(\FF_{p})$.
	In particular, it is a Tate motive.
\end{intro-theorem}

 This expresses the motivic cohomology of the (infinite-dimensional) stack of local $G$-shtukas in terms of the finite-type stacks of $G$-zips arising through its stratification.  In other words, all cohomological information of $\Sht_G$ can already be recovered from its truncated models. Consequently, all information about the motivic cohomology of  $\Sht_G$ is already encoded in the finite-dimensional stacks of $G$-zips, whose motives and Chow rings are  explicitly known \cite{Brokemper,WZ,MotGZip}. 
 
\medskip

Beyond this structural statement, our methods provide general tools for working with motives on ind- and pro-Artin stacks.  In particular, we prove: 

\begin{intro-proposition}[\protect{cf. Section \ref{sec-complement}}]
	Let $f\colon X\rightarrow Y$ be a morphism of ind-schemes (not necessarily of ind-(finite type)) and let $G$ be a pro-algebraic group with split pro-unipotent kernel acting on $X$ and $Y$ such that $f$ is $G$-equivariant. Then there exists a functor $f_{*}\colon \DM(X/G)\to\DM(Y/G)$ preserving compact objects, with the following properties:
	\begin{enumerate}
		\item The functor $f_{*}$ admits a left adjoint $f^{*}$.
		\item If $f$ is ind-proper, then $f_{*}\simeq f_{!}$.
		\item If $f$ is an open immersion, then $f^{!}\simeq f^{*}$.
	\end{enumerate}
	Moreover, one obtains a localization sequence. In case one has also a lower $!$-functor, we also have $!$/$*$-base change and a projection formula. 
\end{intro-proposition}

The techniques developed here provide a general framework for working with motives on ind- and pro-Artin stacks and should be applicable in related contexts, such as the study of Rapoport–Zink spaces or motivic versions of the Satake equivalence.

\subsection*{Acknowledgement}
I want to thank Chirantan Chowdhury and Torsten Wedhorn for various discussions around this topic.

This project was funded by the Deutsche Forschungsgemeinschaft (DFG, German Research Foundation) TRR 326 \textit{Geometry and Arithmetic of Uniformized Structures}, project number 444845124.

\section{The stack of local $G$-shtukas}
\label{sec-GSht}

Let us quickly recall the construction of local $G$-shtukas and its truncated counterpart following \cite{VW}. Let us fix a a prime $p>0$ and let $k=\FF_{q}$ be a finite field of characteristic $p>0$. Further, let $G$ be a split reductive group scheme over $k[\![z]\!]$. Note that $G$ admits a model over $k$, which we also denote by $G$. Moreover, we fix a split maximal torus $T$ of $G$ and denote the set of dominant cocharacters of $T$ by $X_{*}(T)_{+}$. 

As always one can define the loop group of $G$
$$
	LG\colon R\mapsto G(R(\!(z)\!)),
$$
as well as the positive loop group of $G$
$$
	L^{+}G\colon R\mapsto G(R[\![z]\!])
$$
as fpqc-sheaves on the category of $k$-schemes.
These are representable by ind-schemes and $L^{+}G$ is a subsheaf of $LG$. Note that $LG = \colim (LG)_{i}$ is an ind-affine ind-scheme but not of ind-(finite type).

\begin{defi}[\protect{\cite{HV}}]
	A local $G$-shtuka over an $k$-scheme $S$ is a tuple $(\Ecal,\phi)$, where $\Ecal$ is an $L^{+}G$-torsor on $S$ and $\phi\colon \sigma^{*}\Ecal[1/z]\to \Ecal[1/z]$ is an isomorphism of $LG$-torsors\footnote{Here we $\Ecal[1/z]$ denotes the image of $\Ecal$ under the map $\check{H}^{1}(S,L^{+}G)\to \check{H}^{1}(S,LG)$, cf. \cite{HV}.}.
	
	We denote by $\GSht(S)$ the groupoid of local $G$-shtukas over $S$ and by $\GSht$ the associated fpqc-stack.  
\end{defi}

One can realize $\GSht$ as a ind-pro-Artin stack in the following sense. Let $L^{+}G$ act on $LG$ via Frobenius conjugation, i.e. $\Ad_{\sigma}\colon (a,x) \mapsto ax\sigma(a)^{-1}$. Then we see that $\GSht\simeq LG/\Ad_{\sigma} L^{+}G$ as fpqc quotient stacks. In particular, $\Sht_{G}$ admits a covering by $LG$, an ind-scheme. 

For any dominant cocharacter $\mu\in X_{*}(T)_{+}$, the Bruhat stratification on the affine Grassmannian induces a stratification on $LG$. We denote the associated Schubert cells by $LG_{\mu}$ and the associated Schubert varieties by $LG_{\leq \mu}$. 
Note that $LG_{\leq \mu}\subseteq LG$ is a closed embedding and thus $LG_{\leq \mu}$ is a representable by an affine scheme. Moreover, $LG_{\mu}\subseteq LG_{\leq \mu}$ is affine open embedding. 
This defines a stratification on $\coprod_{\mu\in X_{*}(T)_{+}}LG_{\mu}\to LG$.
We denote the associated quotients 
\[
	\GSht^{\leq \mu}\coloneqq \quot{LG_{\leq \mu}/\Ad_{\sigma}L^{+}G}_{\mathrm{fpqc}},\quad \GSht^{\mu}\coloneqq \quot{LG_{\mu}/\Ad_{\sigma}L^{+}G}_{\mathrm{fpqc}}.
\]

Now let us recall some theory on the moduli of $N$-truncated local $G$-shtukas, for $1\leq N\leq \infty$, following \cite{VW}. We will give a definition of this stack by quotients of truncated loop groups. Let us define those first.

For $1\leq N\leq \infty$, we set
$$
	L^{(N)}G : R\mapsto G(R[\![z]\!]/(z^{N})).
$$
Moreover, for any cocharacter $\mu\in X_{*}(T)_{+}$, we obtain groups $E_{N}(G,\mu)$ acting on $L^{(N)}G$ and transition maps $E_{N+1}(G,\mu)\to E_{N}(G,\mu)$ (cf. \cite{VW}). We will not make the group $E_{N}(G,\mu)$ explicit except for $N=1$ later on, since otherwise we will not need the explicit description of $E_{N}(G,\mu)$.

\begin{defi}[\protect{\cite{VW}}]
	We define the moduli stack of $N$-truncated local $G$-shtukas via 
	$$
		\nSht{N}_{G}^{\mu}\coloneqq \quot{L^{(N)}G/E_{N}(G,\mu)}_{\mathrm{fpqc}}.
	$$
\end{defi}
The composition of the canonical map $L^{(N+1)}G\to L^{(N)}G$ as well as the truncation map $E_{N+1}(G,\mu)\to E_{N}(G,\mu)$ yields a truncation map

\[
	\tau_{N}\colon \nSht{(N+1)}_{G}^{\mu}\to \quot{L^{(N)}G/E_{N+1}(G,\mu)} \to \nSht{N}_{G}^{\mu},
\]
where $E_{N+1}(G,\mu)$ acts on $L^{(N)}G$ via restriction along the truncation map.

\begin{rem}
\label{rem-Sht-coprod}
	Similarly to $\GSht$ one can define $\nSht{N}_{G}$ for any $0\leq N\leq \infty$ equipped with a canonical map $\nSht{\infty}_{G}\to\GSht$, cf. \cite{VW}. However, if $G$ is split, we have an equivalence 
	\[
		\nSht{N}_{G}\simeq \coprod_{\mu\in X_{*}(T)_{+}}\nSht{N}_{G}^{\mu}.
	\]
	In particular, we will not recall the specific definition of $\nSht{N}_{G}$ and define it as the coproduct above.
\end{rem}

In this article, we will be interested in pro-algebraic group schemes arising through successive extensions of unipotent group schemes. Let us define this notion.

\begin{defi}
	Let $G$ be a pro-algebraic group indexed by $\NN$. We say that $G$ has \textit{split pro-unipotent kernel} if $\Ker(G\to G_{1})$ is split pro-unipotent, cf. \cite[App. A]{RS1}.
\end{defi}

\begin{rem}
	Let us  note that in the definition of $\nSht{N}_{G}^{\mu}$ as well as $\GSht$ and $\GSht^{\mu}$, we can equivalently consider the \'etale sheafification since $L^{+}G$ and $E_{\infty}(G,\mu)$ have split pro-unipotent kernel \cite[Prop. A.4.9]{RS1}. In particular, we will only consider \'etale stack quotients from here on.
\end{rem}

\begin{rem}
\label{rem-infty}
	Let us remark that the stacks $\nSht{\infty}_{G}^{\mu}\cong \GSht^{\mu}$ are equivalent through a canonical map $\nSht{\infty}_{G}\to \GSht$ \cite[Thm. 6.21 (2)]{VW}. This is the main technical point, which allows us to compute $M^{c}(\Sht_{G}^{\leq \mu})$ (see Definition \ref{def-motive}) in terms of motives of $\Zip{G}^{\mu'}$ for $\mu'\leq \mu$.
\end{rem}

The stack $\nSht{1}_{G}^{\mu}$ can be described more explicitly in terms of $G$-zips. To define the stack of $G$-zips let us note that the cocharacter $\mu$ yields a parabolic $P_{+}\subseteq G$ with Levi-decomposition $P_{+} = U_{+} \rtimes L$. Let $P_{-}$ denote the opposite parabolic with opposite Levi-decomposition $P_{-} = U_{-}\rtimes L$. Then we let $E_{\mu}$, defined as 
\[
	E_{\mu}\coloneqq \lbrace (lu_{+},l^{(q)}u_{-})\mid l\in L, u_{+}\in U_{+}, u_{-}\in U_{-}^{(q)} \rbrace\subseteq P_{+}\times P_{-}^{(q)},
\]
act on $G$ via $(e,e')\cdot g\coloneqq ege'^{-1}$. The quotient stack $ \Zip{G}^{\mu} = \quot{G/E_{\mu}}$ is the stack of $G$-zips of type $\mu$ of \cite{PWZ}.

In \cite[Cor. 6.27]{VW} they show that there exists a universal homeomorphism 
\[
	\phi\colon \nSht{1}_{G}^{\mu}\to \Zip{G}^{\mu}.
\]
In the following, we will need a slight extension of this result in the sense that the universal homeomorphism $\phi$ is actually compatible with the quotient structure:

\begin{lem}
\label{lem.univ.hom}
	There exists a universal homeomorphism $E_{1}(G,\mu)\to E_{\mu}$ that is compatible with the respective actions on $G$ inducing $\phi$.
\end{lem}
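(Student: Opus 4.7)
The plan is to unwind the definition of $E_1(G,\mu)$ from \cite{VW} and produce an explicit morphism of group schemes $\psi\colon E_1(G,\mu)\to E_\mu$ that intertwines the two actions on the common atlas $L^{(1)}G\cong G$, and then verify that $\psi$ is a universal homeomorphism. Since the atlas is literally the same on both sides, uniqueness forces the induced map on stack quotients to coincide with the universal homeomorphism $\phi$ from \cite[Cor. 6.27]{VW}.

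First I would observe that $L^{(1)}G=G$ as $k$-group schemes, since $R[\![z]\!]/(z)=R$. Hence it suffices to produce a morphism $\psi$ of group schemes with $\psi(e)\cdot g=e\cdot g$ under this identification, where on the left we use the $E_1(G,\mu)$-action coming from $\Ad_{\sigma}$ and on the right the zip action $(e,e')\cdot g=ege'^{-1}$. Next I would recall the construction of $E_1(G,\mu)$ from \cite[\S 6]{VW}: modulo $z$, the Frobenius-twisted filtrations defining the groups $E_N(G,\mu)$ collapse, and the remaining data amounts to a pair of elements in the parabolic $P_+$ and in the Frobenius pullback $P_-^{(q)}$ of the opposite parabolic, whose Levi components are compatible under Frobenius. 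Up to a nilpotent discrepancy supported in the unipotent radicals, this is precisely the description of $E_\mu$.

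I would then define $\psi$ by sending an $R$-point of $E_1(G,\mu)$ to the corresponding pair in $P_+(R)\times P_-^{(q)}(R)$ and check that it lands in $E_\mu$. The action compatibility is a direct computation: the twisted conjugation $a\mapsto ax\sigma(a)^{-1}$ reduces modulo $z$ to $p_+\,g\,(p_-^{(q)})^{-1}$ once one uses that $a\in L^{+}G$ decomposes along $P_+$ modulo $z$ and that $\sigma(a)$ lies in $P_-^{(q)}$ modulo $z$.

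The main obstacle is verifying that $\psi$ is a universal homeomorphism. For this I would compute the kernel and cokernel of $\psi$ as fpqc sheaves of groups and show that they are supported inside split unipotent group schemes in characteristic $p$, so that any residual discrepancy factors through a Frobenius power; a Frobenius isogeny of finite-type group schemes is the prototypical universal homeomorphism, and this yields the claim after descending along the $E_1(G,\mu)$-torsor $G\to \nSht{1}_G^{\mu}$. Once $\psi$ is exhibited in this form, the compatibility with $\phi$ is automatic from the universal property of the \'etale quotient, since both morphisms of stacks pull back to the identity on the common atlas $G$.
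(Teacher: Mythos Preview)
Your overall strategy --- make both groups explicit, write down a concrete $\psi$, and check compatibility with the action on the common atlas $G$ --- is the same as the paper's. The differences are in the execution.

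First, your identification of $E_1(G,\mu)$ is slightly off. It is not (even approximately) a subgroup of $P_+\times P_-^{(q)}$: by \cite[Prop.~6.13]{VW} one has $E_1(G,\mu)=(U_+\times U_-)\rtimes L$, with the Frobenius appearing only in the \emph{action} on $G$, namely $(e,e')\cdot g = e\,g\,(e'^{(q)})^{-1}$. Correspondingly, $E_\mu$ is $(U_+\times U_-^{(q)})\rtimes L$, where $L$ acts on $U_-^{(q)}$ through Frobenius, and the action is $(e,e')\cdot g = e\,g\,e'^{-1}$. With these descriptions in hand the map $\psi$ is forced: it is the identity on $U_+\rtimes L$ and the relative Frobenius $U_-\to U_-^{(q)}$ on the remaining factor. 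Action compatibility is then a one-line check, since applying Frobenius to $e'\in U_-$ in the action is the same as first replacing $e'$ by $e'^{(q)}\in U_-^{(q)}$ and then acting without Frobenius.

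Second, your proposed verification of the universal homeomorphism property --- computing kernel and cokernel as fpqc sheaves and arguing that the discrepancy factors through a Frobenius power --- is unnecessarily heavy. Once $\psi$ is recognized as a relative Frobenius on one factor and the identity on the others, it is a universal homeomorphism on the nose (e.g.\ \cite[Tag 0CCB]{stacks-project}). There is no hidden nilpotent discrepancy to analyze; the map is already an isomorphism on underlying topological spaces with purely inseparable residue field extensions, and that suffices.
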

\begin{proof}
	The group $E_{1}(G,\mu)$ is given by $(U_{+}\times U_{-})\rtimes L$ \cite[Prop. 6.13]{VW}.  The action of $E_{1}(G,\mu)$ on $G$ is given by $(e,e')\cdot g \coloneqq eg(e'^{(q)})^{-1}$. 
	Note that $E_{\mu}$ can be rewritten as $(U_{+}\times U_{-}^{(q)})\rtimes L$ where the action of $L$ on $U_{-}^{(q)}$ is induced by the $q$-Frobenius. 
	The action on $G$ is given by $(e,e')\cdot g\coloneqq ege'^{-1}$. Therefore, the relative Frobenius $P_{-}\rightarrow P_{-}^{(q)}$ induces a group scheme homomorphism
	$$
		E_{1}(G,\mu)\rightarrow E_{\mu}
	$$
	that is compatible with the actions on $G$ and is a universal homeomorphism \cite[0CCB]{stacks-project}.
\end{proof}

\section{Motives on pro- and ind-Artin stacks}
In the following we want to understand the motive of $\GSht$ representing Borel-Moore homology. Recall, that each of the $\GSht^{\mu}$ is a quotient of an ind-scheme by a pro-algebraic group scheme. However, as shown in \cite{VW}, the truncated versions $\nSht{N}_{G}^{\mu}$ are in fact Artin stacks. Thus to understand motives on $\nSht{\infty}_{G}^{\mu}$ and therefore on $\Sht_{G}^{\mu}$, we need to understand motives on cofiltered systems of Artin stacks. We call these types of cofiltered system of Artin stacks \textit{pro-algebraic} (we will give a  precise definition below). 

\subsection{Recollection on the theory of motives}

For ind-Artin stacks Richarz-Scholbach defined an $\infty$-category of rational motives via right Kan extension of the classical theory on schemes \cite{RS1}.
In \cite{MotPro} the author has shown that for pro-algebraic stacks, we can define another $\infty$-category rational motives, which we will recall below, via a colimit process. For ind-Artin stacks that also admit a presentation as a pro-algebraic stack this yields a priori two different notions of the $\infty$-category of motives. But, as we will see, in special case like $\nSht{\infty}_{G}^{\mu}$ both definitions of $\DM$ become equivalent and we can benefit from both formalisms.

The benefit of the construction in \cite{MotPro} is that there is, under some assumptions, still a six functor formalism (see Theorem \ref{thm.6.ff}). In \cite{RS1} the existence of a $6$-functor formalism is only shown for ind-schemes of ind-(finite type). However, the insights of \cite{MotPro} enable us to obtain a six functor formalism for rational motives on ind-Artin stacks under the usual finiteness assumptions. In fact, we will see that the existence of a localization sequence can be shown without any finiteness assumption. This is a key insight that enables us to compute motives of stratified stacks (e.g. $\GSht$ which does not admit a covering by ind-schemes of ind-(finite type)) by the motives of the strata. \par

Let us remark here that we do need both formalisms of \cite{RS1} and \cite{MotPro}. The stack $\GSht$ is a priori just a stack for the fpqc-toplogy. However, the explicit construction as a quotient reduces to the study of motives on the ind-scheme $LG$ which is not if ind-(finite type) and motives on $L^{+}G$, which is a pro-scheme. Moreover, the stack $\nSht{\infty}_{G}$, which we will use to compute the motive of $\GSht$, is an honest pro-algebraic stack.

\medskip
In the following we recall some aspects of rational motives needed in this article, following \cite{CD}, \cite{RS1}, and \cite{MotPro}.\par
Let $p>0$ be a prime and let $S$ be a regular scheme of finite type over $\FF_{p}$.
For any scheme $X$ of finite type over $S$, denote by $\DM(X)$ the $\infty$-category of Beilinson motives over $X$ \cite{CD}. The assignment $X\mapsto \DM(X)$, $f\mapsto f^{*}$ satisfies h-descent and forms part of a six-functor formalism, due essentially to Ayoub \cite{AyoubThesis} and in the $\infty$-categorical setting to Robalo \cite{Robalo}. We will not recall details here, as this is classical and beyond the scope of this summary, but we emphasize two relevant facts about $\DM$.

Since $\DM$ satisfies \'etale descent, it extends to Artin stacks via gluing along a smooth atlas. Note, however, that the comparison between motivic cohomology and $K$-theory fails in the equivariant setting, though there remains a comparison with Edidin–Graham’s equivariant Chow groups \cite[Thm.~2.2.10]{RS1}.

Richarz–Scholbach show that this construction yields an $\infty$-category of rational motives for ind-Artin stacks locally of finite type over $S$ \cite{RS1}. In this case one must distinguish between two gluings: using $f^{*}$ or $f^{!}$. For ind-Artin stacks these differ in general. We write $\DM^{*}(X)$ for the category obtained by gluing via $f^{*}$ and $\DM^{!}(X)$ for that obtained via $f^{!}$. For technical reasons, Richarz–Scholbach work with $\DM^{!}$. Moreover, the six functors extend in the setting of ind-schemes under suitable assumptions (cf.~\emph{loc.\ cit.}), and an \'etale realization functor still exists \cite[Thm.~2.3.7]{RS1}.

Coming back to pro-systems of Artin stacks, we will summarize the necessary definitions and the main result of \cite{MotPro} for motives on pro-systems of Artin stacks.

\begin{defi}[\protect{\cite{MotPro}}]
\label{defi.p-stack}
Let $x\colon \NN^{\op}\rightarrow \PreStk_{S}$ be a diagram of prestacks over $S$ and let $X\coloneqq \lim x$ be the associated limit prestack. We call the tuple $(x,X)$ a \textit{pro-algebraic stack}, if $x(i)$ is an algebraic stack locally of finite type over $S$ for all $n\in \NN$.\par 
	If $(x,X)$ is a pro-algebraic stack, then we denote by $X_{n}\coloneqq x(n)$ and the transition maps $X_{m}\rightarrow X_{n}$ by $x_{nm}$, for $n\leq m\in\NN$.\par
	There is an obvious notion of a morphisms and properties.
\end{defi}

\begin{example}
	The assignment $\tau\colon N\mapsto \nSht{N}_{G}$ yields by design a pro-algebraic stack. In \cite{MotPro} the functor $\tau\colon N\mapsto \mathrm{Disp}_{N}$ was considered and motives on $(\tau,\Disp)$ were studied, where $\mathrm{Disp}_{N}$ denotes the stack of $N$-truncated displays of Lau \cite{LauDisp}. 
\end{example}
 
 \begin{defi}[\protect{\cite{MotPro}}]
 	Let $(x,X)$ be a pro-algebraic stack. Then we say that $(x,X)$ is
	\begin{enumerate}
		 \item \textit{classical} if $x_{nm}$ is affine for all $n\leq m\in \NN$,
		\item \textit{tame} if $x_{nm}$ is smooth for all $n\leq m\in \NN$, and
		\item \textit{strict} if $x_{nm}^{*}\colon \DM^{*}(X_{n})\rightarrow \DM^{*}(X_{m})$ is fully faithful for all $n\leq m\in \NN$.
	\end{enumerate}
 \end{defi}

Let us note that the property \textit{classical} yields representability of the limit and strictness ensures that the motivic cohomology of the limit $X$ agrees with the motivic cohomology of $X_{1}$.

\begin{lem}
\label{lem-strict}
	Let $G$ be a split reductive group and $\mu\in X_{*}(T)_{+}$ a cocharacter. The pro-algebraic stack $(\tau, \nSht{\infty}_{G}^{\mu})$ is strict tame and thus $(\tau,\nSht{\infty}_{G})$ is also strict tame.
\end{lem}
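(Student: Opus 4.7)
The plan is to factor each transition map of the pro-system through an intermediate quotient where each factor is either a torsor under a vector group or a classifying-stack extension by a split unipotent group, and then invoke $\mathbb{A}^1$-homotopy invariance of motives on each factor. Concretely, the map $x_{N,N+1}\colon\nSht{(N+1)}_{G}^{\mu}\to\nSht{N}_{G}^{\mu}$ already comes factored in the excerpt as
\[
\nSht{(N+1)}_{G}^{\mu}\longrightarrow \quot{L^{(N)}G/E_{N+1}(G,\mu)}\longrightarrow \nSht{N}_{G}^{\mu},
\]
so I would analyse the two legs separately and then take the composite.

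First I would handle the left leg. It is the base change along the smooth atlas $L^{(N)}G\to[L^{(N)}G/E_{N+1}(G,\mu)]$ of the truncation morphism $L^{(N+1)}G\to L^{(N)}G$. Since $G$ is smooth, the kernel of $L^{(N+1)}G\to L^{(N)}G$ is (étale-locally) isomorphic to the additive group $\mathrm{Lie}(G)\otimes z^{N}k$, so the truncation is a torsor under a vector group. Base change preserves this, so the left leg is a smooth morphism whose geometric fibres are affine spaces, and $\mathbb{A}^{1}$-invariance of $\DM$ makes its $*$-pullback fully faithful (indeed an equivalence).

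Next I would handle the right leg $[L^{(N)}G/E_{N+1}(G,\mu)]\to[L^{(N)}G/E_{N}(G,\mu)]$. It is the gerbe under the kernel $K_{N}\coloneqq \ker(E_{N+1}(G,\mu)\to E_{N}(G,\mu))$. Because $E_{\infty}(G,\mu)$ has split pro-unipotent kernel (cited in the excerpt via \cite[Prop.~A.4.9]{RS1}), each $K_{N}$ is a split unipotent group, hence smooth with unipotent (in particular successive vector-group) structure. Smoothness of $K_{N}$ gives smoothness of the right leg, and the homotopy invariance of $\DM$ for classifying stacks of split unipotent groups (the same ingredient used in \cite{RS1} and \cite{MotPro} for strictness of $\Disp$) implies that the $*$-pullback along this gerbe is fully faithful.

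Composing, $x_{N,N+1}$ is smooth (tame) and $x_{N,N+1}^{*}$ is fully faithful (strict), and iterating along the pro-system $\NN^{\op}$ yields strict tameness of $(\tau,\nSht{\infty}_{G}^{\mu})$. The final sentence about $(\tau,\nSht{\infty}_{G})$ is then immediate from Remark \ref{rem-Sht-coprod}, since strictness and tameness are both preserved under disjoint unions of pro-systems. The main obstacle I anticipate is the careful identification of $K_{N}$ as split unipotent from the data given in \cite{VW}: once the transition kernels are known to sit in the pro-unipotent part of $E_{\infty}(G,\mu)$, the motivic input is standard, but justifying this may require tracing through the explicit description of $E_{N}(G,\mu)$ (only given for $N=1$ in the excerpt) and matching it with the filtration used in \cite[App.~A]{RS1}.
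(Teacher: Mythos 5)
Your proposal is correct and follows essentially the same route as the paper: split the transition map into the change of space ($L^{(N+1)}G\to L^{(N)}G$, a torsor under a unipotent/vector group by \cite[Prop.~A.4.9]{RS1}) and the change of group ($E_{N+1}(G,\mu)\to E_{N}(G,\mu)$), then apply $\AA^{1}$-invariance to each, and reduce the untruncated statement to the $\mu$-indexed pieces via the coproduct decomposition. The one obstacle you flag --- identifying the transition kernels $K_{N}$ --- is resolved in the paper by a direct citation to \cite[Prop.~6.16]{VW}, which states that $E_{N+1}(G,\mu)\to E_{N}(G,\mu)$ is surjective with kernel a vector group, so no tracing through the explicit description of $E_{N}(G,\mu)$ is needed.
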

\begin{proof}
	Let us first show that $(\tau, \nSht{\infty}_{G}^{\mu})$ is strict tame. The morphism $E_{N+1}(G,\mu)\to E_{N}(G,\mu)$ is surjective with kernel given by a vector group \cite[Prop. 6.16]{VW}. 
	Therefore, by $\AA^{1}$-invariance \cite[Prop. 2.2.11]{RS1}, it is enough to show that for any $N'>N$ the map $L^{(N')}(G)\rightarrow L^{(N)}(G)$ is a torsor under a unipotent group. But this follows from \cite[Prop. A.4.9]{RS1}.
	
	Let us now show how this implies that also $(\tau,\nSht{\infty}_{G})$ is strict tame. By the split assumption on $G$, we have for all $N\in\NN$ a decomposition
	$$
		\nSht{N}_{G} = \coprod_{\mu\in X_{*}(T)_{+}} \nSht{N}_{G}^{\mu}
	$$
	\cite{VW}. This reduces us to the first case, which we already proved. 
\end{proof} 

For a pro-algebraic stack $(x,X)$, we construct a DG-category of motives on $(x,X)$ via the formula
$$
	\DM^{*}(x,X)\coloneqq \colim_{n\in \NN, x^{*}} \DM^{*}(X_{n})
$$
where we take the colimit in $\mathrm{Mod}_{\Dcal(\QQ)}(\Pr^{L})$.
This assignment can be made functorial and in \cite{MotPro} the author shows that we still obtain a $6$ functor formalism, which is motivic in a suitable sense.

\begin{thm}[\protect{\cite[Thm. 5]{MotPro}}]
\label{thm.6.ff}
	Let $(x,X)$ be a pro-algebraic stack. Then $\DM^{*}(x,X)$ is a compactly generated  closed symmetric monoidal $\infty$-category. The assignment 
	$$
	(x,X)\mapsto \DM^{*}(x,X),\quad f\mapsto f^{*}
	$$
	is part of a $6$-functor formalism $(f^{*}\dashv f_{*}, f_{!}\dashv f^{!},\otimes\dashv \Homline)$ on $!$-adjointable\footnote{See Example \ref{ex-adjointable}.} morphisms locally finite type. If one restricts to morphisms, where each square is cartesian, then this $6$-functor formalism is also motivic and orientable in the sense that it admits an orientation satisfies homotopy invariance, $T$-stability, localization, purity and Tate-twists compute Thom-motives.
\end{thm}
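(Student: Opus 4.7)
The plan is to extend the six-functor formalism available on Artin stacks locally of finite type, as developed by Richarz-Scholbach \cite{RS1} building on Ayoub-Cisinski-D\'eglise, to pro-algebraic stacks through a categorical colimit procedure. The underlying idea is that a pro-algebraic stack $(x,X)$ should admit a compatible ``colimit'' of the six-functor formalisms on the levels $X_{n}$, and the only constraint on morphisms should come from ensuring the various Beck-Chevalley transformations remain isomorphisms, which is precisely the $!$-adjointability hypothesis.

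First I would construct the underlying $\infty$-category by setting
$$
\DM^{*}(x,X) \coloneqq \colim_{n\in\NN,\,x^{*}} \DM^{*}(X_{n})
$$
in $\mathrm{Mod}_{\Dcal(\QQ)}(\mathrm{Pr}^{L})$. Each $\DM^{*}(X_{n})$ is compactly generated and closed symmetric monoidal, and the transition functors $x_{nm}^{*}$ are symmetric monoidal colimit-preserving. The colimit therefore inherits a closed symmetric monoidal structure, and filtered colimits in $\mathrm{Pr}^{L}$ along such functors preserve compact generation. The insertion functors $\ins_{n}\colon \DM^{*}(X_{n}) \to \DM^{*}(x,X)$ are then symmetric monoidal and admit right adjoints by presentability.

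Next I would assemble the six functors. For a morphism $f\colon (x,X)\to (y,Y)$ of pro-algebraic stacks, the pullback $f^{*}$ is induced level-wise and passes to the colimit by the universal property; its right adjoint $f_{*}$ exists by the adjoint functor theorem. The functors $f_{!}$ and $f^{!}$ are first constructed on compact objects (which come from some finite level) out of the level-wise $f_{n,!}$ and $f_{n}^{!}$, available for morphisms locally of finite type between Artin stacks, and then extended by continuity. The $!$-adjointability hypothesis is exactly what guarantees that the resulting candidates assemble coherently and satisfy Beck-Chevalley; the internal $\Homline$ is then obtained as the right adjoint of the tensor product.

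The main obstacle is verifying the full set of axioms (compositions, base change, projection formula, localization) together with the motivic enhancements (orientation, homotopy invariance, $T$-stability, purity, Tate twists compute Thom motives) in the pro-algebraic setting. The strategy is to reduce each axiom to a level-wise statement in $\DM^{*}(X_{n})$, where it is already known, exploiting that compact objects of the colimit come from some finite level; the delicate point is the interaction of $f_{!}$ with the transition maps $x_{nm}$, which is precisely what the $!$-adjointability assumption is designed to control. Once this is set up, the motivic properties transfer because smooth pullbacks, Thom motives, and Tate twists are preserved by every structural functor, and for squares that are cartesian at each level the required base change identities follow immediately from the known case of algebraic stacks.
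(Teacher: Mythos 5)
This theorem is not proved in the paper at all: it is quoted verbatim from \cite[Thm.~5]{MotPro}, and the only ``proof'' here is the citation together with the definition $\DM^{*}(x,X)\coloneqq \colim_{n\in\NN,\,x^{*}}\DM^{*}(X_{n})$ in $\mathrm{Mod}_{\Dcal(\QQ)}(\Pr^{L})$ given just before the statement. Your sketch reproduces exactly that construction and the expected strategy of the cited reference (level-wise functors, compact objects coming from finite levels, $!$-adjointability as the Beck--Chevalley condition), so it is consistent with the paper's approach; a full verification would of course have to be checked against \cite{MotPro} itself, in particular the claim that the transition functors $x_{nm}^{*}$ preserve compact objects, which is what your compact-generation step silently uses.
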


\begin{example}
\label{ex-adjointable}
	The condition $!$-adjointable is satisfied in all of the cases that interest us. For example if $(x,X)$ is strict tame and $(y,Y)$ is the constant diagram. In particular, for the purpose of this article the reader may forget about this condition, as it will be satisfied by the objects of interest.
\end{example}

While for ind-Artin stacks, we have the ambiguity that there are two possible definitions of $\DM$ via $f\mapsto f^{*}$ or $f\mapsto f^{!}$, this is not the case for tame pro-algebraic stacks. 

\begin{prop}[\protect{\cite[Prop. 3.13]{MotPro}}]
\label{prop-shriek}
	Let $(x,X)$ be a tame pro-algebraic stack. Then we have an equivalence 
	$$
		\DM^{*}(x,X)\simeq \colim_{n\in \NN, x^{!}} \DM^{!}(X_{n}).
	$$
\end{prop}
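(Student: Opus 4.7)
The plan is to identify the two $\NN$-indexed diagrams underlying the definitions of $\DM^{*}(x,X)$ and $\colim_{n,x^{!}} \DM^{!}(X_{n})$ up to a coherent system of Thom twists. First I would observe that on a fixed Artin stack $X_{n}$, the notations $\DM^{*}(X_{n})$ and $\DM^{!}(X_{n})$ denote the same underlying $\infty$-category $\DM(X_{n})$, since the $*$/$!$-distinction only becomes visible when one glues across morphisms. Hence the statement reduces to comparing the two diagrams
\[
  n\mapsto \DM(X_{n}),\ x_{nm}^{*} \qquad\text{versus}\qquad n\mapsto \DM(X_{n}),\ x_{nm}^{!}
\]
as functors $\NN\to \mathrm{Mod}_{\Dcal(\QQ)}(\Pr^{L})$. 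Since $(x,X)$ is tame, every transition $x_{nm}$ is smooth, so both $x_{nm}^{*}$ and $x_{nm}^{!}$ are left adjoints and both colimits are well-posed in $\Pr^{L}$.

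Second, I would invoke relative purity in the Beilinson motives formalism \cite{CD}: for any smooth morphism $f$ of relative dimension $d$ there is a canonical natural equivalence $f^{!}\simeq f^{*}\langle d\rangle$, where $\langle d\rangle = (d)[2d]$ is the Thom twist. Applied levelwise to the transitions $x_{nm}$, these produce autoequivalences $\langle d_{nm}\rangle$ of $\DM(X_{n})$, which commute with pullback up to canonical isomorphism (they are given by tensoring with invertible objects pulled back from the base). Conjugating the $*$-diagram by the tower of these autoequivalences should yield a functor naturally equivalent to the $!$-diagram, and the conclusion will then follow from the fact that colimits in $\mathrm{Mod}_{\Dcal(\QQ)}(\Pr^{L})$ of naturally equivalent diagrams are canonically equivalent.

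The main obstacle is coherence: one must upgrade the pointwise purity isomorphisms to a homotopy coherent equivalence of $\infty$-functors $\NN\to \mathrm{Mod}_{\Dcal(\QQ)}(\Pr^{L})$. Concretely, one needs to know that the purity isomorphism is compatible with composition of smooth morphisms, i.e. that $p_{g\circ f}$ agrees with the appropriate composite of $p_{f}$ and $p_{g}$, and that these compatibilities assemble into a fully coherent datum along the tower. This is, however, exactly the functoriality of purity built into Ayoub's and Cisinski--Déglise's six-functor formalism (\cite{CD}), carried over to the $\infty$-categorical setting in \cite{Robalo}; once that input is invoked, the equivalence of diagrams, and therefore of colimits, is formal.
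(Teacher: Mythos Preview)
The paper does not prove this proposition; it is quoted verbatim from \cite[Prop.~3.13]{MotPro} and used as a black box. Your argument---identifying $\DM^{*}(X_{n})$ with $\DM^{!}(X_{n})$ levelwise and then invoking relative purity for the smooth transitions $x_{nm}$ to intertwine the two $\NN$-diagrams via a tower of Thom twists---is the natural and expected one. The conjugation picture is correct: setting $\alpha_{n}\coloneqq \Th(T_{X_{n}/X_{1}})$, additivity of Thom spaces in the triangle $T_{X_{m}/X_{n}}\to T_{X_{m}/X_{1}}\to x_{nm}^{*}T_{X_{n}/X_{1}}$ yields $x_{nm}^{!}(-)\simeq \alpha_{m}\otimes x_{nm}^{*}(\alpha_{n}^{-1}\otimes -)$, so the two diagrams differ by a coherent system of autoequivalences and hence have equivalent colimits in $\Pr^{L}$.

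Two small corrections. First, the purity isomorphism must be available on Artin stacks, not just schemes; \cite{CD} and \cite{Robalo} treat the scheme case, so the reference you actually need is the extension of the six-functor formalism to Artin stacks in \cite{RS1} (which the present paper relies on throughout). Second, the transitions $x_{nm}$ need not have constant relative dimension, so the twist is genuinely $\Th(T_{x_{nm}})$ rather than a bare $\langle d_{nm}\rangle$; this changes nothing in the argument but should be stated accurately. The coherence you flag is real but is exactly what the functorial packaging of purity in these references provides.
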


In a similar fashion classical pro-algebraic stacks $(x,X)$ have representable limits. This allows a continuity result, which connects the two definitions $\DM^{*}(x,X)$ and $\DM^{*}(X)$.

\begin{prop}[\protect{\cite[Prop. 2.13]{MotPro}}]
	\label{prop-classical}
	Let $(x,X)$ be a classical pro-algebraic stack. Then we have an equivalence 
	\[
		\DM^{*}(x,X) \simeq \DM^{*}(X).
	\]
\end{prop}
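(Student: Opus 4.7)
The plan is to construct a canonical comparison functor and then verify it is an equivalence by reducing the claim, via descent, to the classical continuity of $\DM$ along affine cofiltered limits of schemes.

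First I would produce the comparison functor. Because $(x,X)$ is classical, every transition $x_{nm}\colon X_m\to X_n$ is affine, and a standard gluing argument along affine smooth atlases shows that $X = \lim x$ is representable as an algebraic stack (no longer of finite type) with canonical projections $p_n\colon X\to X_n$ satisfying $p_n \simeq x_{nm}\circ p_m$ coherently. Since the functor $(-)^{*}\colon \PreStk^{\op}\to \mathrm{Mod}_{\Dcal(\QQ)}(\Pr^{L})$ sends $X_n\mapsto \DM^{*}(X_n)$ with transitions $x_{nm}^{*}$, the universal property of the colimit produces a canonical comparison functor
\[
	\Phi\colon \DM^{*}(x,X) \;=\; \colim_{n,\,x^{*}}\DM^{*}(X_n)\;\longrightarrow\;\DM^{*}(X).
\]

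Second, I would reduce the problem to schemes. Pick a smooth atlas $U_1\to X_1$, pull it back along $x_{1m}$ to atlases $U_n\to X_n$, and set $U \coloneqq \lim U_n$. Because affine morphisms are stable under limits, $U\to X$ is again a smooth atlas and the \v{C}ech nerves assemble into a levelwise pro-system of cofiltered limits of (qcqs) schemes with affine transitions. Both sides of the desired equivalence are computed by totalizations of the respective simplicial diagrams via \'etale descent, so it is enough to check the analogous statement at each simplicial level and show that the resulting totalizations are compatible with passing to the colimit over $n$.

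Third, the scheme-level input is exactly the continuity property of $\DM$ for affine cofiltered limits of qcqs schemes, due to Ayoub and formalized by Cisinski--D\'eglise: for $V=\lim V_n$ one has $\DM(V)\simeq\colim_n\DM(V_n)$ in $\Pr^{L}$. Applied levelwise to the \v{C}ech nerves above, this provides the building blocks needed for $\Phi$ to be an equivalence before totalization.

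The main obstacle is the last interchange: filtered colimits in $\Pr^{L}$ do not commute with arbitrary limits, so one cannot simply swap the $\Delta$-indexed totalization from smooth descent with the filtered colimit over $\NN$. I would handle this by passing to compact objects. By Theorem~\ref{thm.6.ff} both $\DM^{*}(X)$ and $\DM^{*}(x,X)$ are compactly generated, and compact objects of the latter are (up to retracts) of the form $p_n^{*}A$ for some $A\in\DM^{*}(X_n)^{c}$. It then suffices to show $\Phi$ is fully faithful on such generators, i.e.\ that
\[
	\colim_{m\geq n,\, n'}\;\Map_{\DM^{*}(X_m)}\bigl(x_{nm}^{*}A,\, x_{n'm}^{*}B\bigr)\;\xrightarrow{\;\sim\;}\;\Map_{\DM^{*}(X)}\bigl(p_n^{*}A,\, p_{n'}^{*}B\bigr),
\]
which via the right adjoints $(x_{nm})_{*}$ and $(p_n)_{*}$ reduces to a mapping-space computation on each $X_n$; here filtered colimits of mapping spaces of compact objects do commute with the relevant limits. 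This yields fully faithfulness, and essential surjectivity then follows because the images of compact generators at finite level already generate $\DM^{*}(X)$ by continuity on the atlas.
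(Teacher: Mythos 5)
The paper does not prove this proposition at all: it is imported verbatim from \cite[Prop.~2.13]{MotPro}, so there is no in-text argument to compare yours against. Your overall architecture --- build the comparison functor from the universal property of the colimit, represent the limit $X$ using affineness of the $x_{nm}$, choose a compatible tower of smooth atlases $U_n$, and feed in Ayoub/Cisinski--D\'eglise continuity for $\DM$ along affine cofiltered limits of schemes --- is the standard and expected route, and you correctly identify the crux: a filtered colimit over $\NN$ must be interchanged with the $\Delta$-indexed totalization coming from smooth descent.

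However, your resolution of that crux has a genuine gap. You assert that ``filtered colimits of mapping spaces of compact objects do commute with the relevant limits,'' but the relevant limit is a totalization over all of $\Delta$, which is an \emph{infinite} limit of spectra; filtered colimits of spectra commute with finite limits, not with arbitrary totalizations, so this principle is false as stated and cannot be invoked without a uniform convergence or coconnectivity argument for the descent tower. Relatedly, your appeal to Theorem~\ref{thm.6.ff} for compact generation covers only $\DM^{*}(x,X)$, not the target $\DM^{*}(X)$, whose compact generation is essentially part of what is being proved. The standard repair is the one the paper itself uses elsewhere (see the footnote identifying $\DM^{*}$ with $\DM^{!}$ on Artin stacks): replace the cosimplicial limit along the right adjoints $p_{\bullet}^{*}$ by the simplicial \emph{colimit} along the left adjoints $p_{\bullet\#}$ in $\Pr^{L}$, so that both sides of the comparison become iterated colimits over $\Delta^{\op}\times\NN$, which commute for free; alternatively, check the unit and counit of $\Phi$ and its right adjoint after pullback to the atlas, using conservativity of $p^{*}$ and smooth base change, which again avoids the totalization. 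With either of these substitutions your argument goes through; without one, the fully-faithfulness step does not close.
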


Finally, let us define the notion of a motive. We will only need the motive representing Borel-Moore homology.

\begin{defi}
\label{def-motive}
	Let $(x,X)$ be a strict tame pro-algebraic stack over $S$ with structure map $f\colon (x,X)\rightarrow S$, where we see $S$ as a constant diagram. Then we define 
	$$
		M^{c}_{S}(x,X)\coloneqq f_{*}f^{!}1_{S}.
	$$
	If the base is clear, then we may omit the subscript $S$.\par 
	If $X$ is representable by an algebraic stack locally of finite type, we will use the notation $M^{c}_{S}(X)$ to denote the associated motives in $\DM^{*}(X)$.
\end{defi}

Note that $M^{c}_{S}(X)$ represents Borel-Moore homology (or dually cohomology with compact support).

 \subsection{Complements on motives on ind-pro-Artin stacks}
 \label{sec-complement}
 Recall the notation of Section \ref{sec-GSht}.
 In this short subsection, we want to construct for the structure map $p\colon \GSht\to \Spec(k)$ a $!$-pullback and a $*$-adjunction. Moreover, we will need later on a localization sequence, to describe $p_{*}p^{!}1_{k}$ as a decomposition of $M^{c}$ of the strata. Note that we cannot guarantee the existence of $p_{!}$ since the loop group $LG$ is not of ind-(finite type) and so $\GSht$ is not even locally of ind-(finite type) in any suitable sense. This can be avoided once we extend the theory using methods of \cite[App. C]{Hoyois}. However, we will not make use of this extension, since we will not need it for our application.
  
\vspace{6pt} We begin with the following observations. Let $X$ be an (ordinary) Artin stack over $k$, then $\DM^{*}(X)$ is equivalent\footnote{The idea is the same as in \cite{Scholze6ff}. One checks that $(p_{\bullet}^{!})$ admits a left adjoint and the counit is given by $\colim_{[n]\in\Delta} p_{n!}p_{n}^{!}\rightarrow \id$ which by purity and descent for $\DM$ is an equivalence.} to $\DM^{!}(X)$ via $(p_{\bullet}^{!})\colon \DM^{*}(X)\to \lim_{\Delta}\DM^{!}(U^{/X}_{\bullet})$, where $p\colon U\to X$ is a smooth cover and $U^{/X}_{\bullet}$ denotes the \v{C}ech nerve. It is clear from the construction that under $\varphi_{X}\colon \DM^{*}(X)\simeq \DM^{!}(X)$ for any map $f\colon X\rightarrow Y$ locally of finite type, we have 
 \begin{equation}\label{eq.f!}
	 f^{!}\simeq \varphi^{-1}_{Y}f^{*}\varphi_{X}	
 \end{equation}

 Now let us show how to construct the $*$-adjunction for any morphism of ind-schemes. Our methods do not use that we have ind-schemes at all, so instead we will even show the existence of the $*$-adjunction for morphisms of ind-Artin stacks (again without any finiteness assumption).

Let $X=\colim_{i} X_{i}$ be an ind-Artin stack with transition maps $t_{ij}$.  Since each $t_{ij}$ is a closed immersion (\'etale locally), we have $t_{ij*}\simeq t_{ij!}$ and thus an equivalence 
\begin{equation}\label{eq.colim}
	 	\colim_{t_{ij!}}\DM^{!}(X_{i})\simeq \colim_{t_{ij*}}\DM^{*}(X_{i})
\end{equation}

Consider a morphism $f\colon X\to Y$ of ind-Artin stacks (not necessarily locally of ind-(finite type)). By functoriality and (\ref{eq.colim}) we obtain for any $f\colon X\to Y$ a $*$-pushforward $f_{*}\colon \DM^{!}(X)\to \DM^{!}(Y)$, induced by $f_{i*}$ on each level. Moreover, since $t_{ij}$ are closed immersions, the induced square
\[
 	\begin{tikzcd}
		\DM^{*}(X_{i})\arrow[r,"t^{X}_{ij*}"]\arrow[d,"f_{i*}"]& \DM^{*}(X_{j})\arrow[d,"f_{j*}"]\\
		\DM^{*}(Y_{i})\arrow[r,"t^{Y}_{ij*}"]&\DM^{*}(Y_{j*})
	\end{tikzcd}
\]
 is left adjointable, i.e. the exchange map 
\[
 	f_{i}^{*}t^{Y}_{ij*}\to t^{X}_{ij*}f_{j}^{*}
\]
 is an equivalence by proper base change. Since also each $t_{i}$ is a colimit preserving functor, we can use\footnote{Note that in \textit{loc. cit.} the colimit along $*$-pullbacks is considered, so one has to be careful with the notation.} \cite[Lem. 3.9]{MotPro} to see that $f_{*}$ admits a left adjoint 
 \[
 f^{*}\colon \colim_{t^{Y}_{ij*}} \DM^{*}(Y_{i})\to \colim_{t^{X}_{ij*}}\DM^{*}(X_{i}).
 \] 
Hence, via (\ref{eq.colim}), we obtain a left adjoint $f^{*}\colon \DM^{!}(Y)\to\DM^{!}(X)$ of $f_{*}$.

 \begin{prop}
 	Let $f\colon X\rightarrow Y$ be a morphism of ind-Artin stacks. Then there exists a functor $f_{*}\colon \DM^{!}(X)\to\DM^{!}(Y)$ that preserves compact objects with the following properties.
	\begin{enumerate}
		\item The functor $f_{*}$ admits a left adjoint $f^{*}$.
		\item If $f$ is ind-proper, then $f_{*}$ is left adjoint to $f^{!}$.
		\item If $f$ is an open immersion, then $f^{!}\simeq f^{*}$.
	\end{enumerate}
 \end{prop}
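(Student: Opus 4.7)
The construction of $f_*$ together with its left adjoint $f^*$ is essentially already carried out in the paragraph preceding the statement: using the equivalences
\[
\DM^!(X)\simeq \colim_{t^{X}_{ij*}}\DM^*(X_i),\qquad \DM^!(Y)\simeq \colim_{t^{Y}_{ij*}}\DM^*(Y_i),
\]
one assembles the level-wise pushforwards $f_{i*}\colon \DM^*(X_i)\to \DM^*(Y_{j(i)})$ into $f_*$ using the Beck--Chevalley condition along the closed immersions $t_{ij}$ (i.e.\ proper base change), and then appeals to \cite[Lem.~3.9]{MotPro} to produce the left adjoint $f^*$. This settles (1), and the plan is to verify compact preservation and the identifications (2)--(3) by reducing each to the corresponding statement at the finite level.

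For compact preservation, I would use that a set of compact generators of $\DM^!(X)$ is given by $\{t^{X}_{i*}c_i\}$, where $c_i$ ranges over compact objects of $\DM^*(X_i)$: indeed, since $t^{X}_{i*}\simeq t^{X}_{i!}$ is the pushforward along a closed immersion it preserves compacts, and compacts in a filtered colimit in $\PrStL$ along compact-preserving transition functors arise in this way. The identity $f_*t^{X}_{i*}c_i\simeq t^{Y}_{j(i)*}f_{i*}c_i$ coming from Beck--Chevalley then reduces compact preservation of $f_*$ to that of each $f_{i*}$, which is a standard consequence of the $\DM$-theory for Artin stacks locally of finite type together with smooth descent along a presenting atlas.

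For (2), if $f$ is ind-proper then each restriction $f_i$ is proper, so $f_{i*}\simeq f_{i!}$ is left adjoint to $f_i^!$. The proper base change equivalence $t^{X}_{ij*}f_i^!\simeq f_j^!t^{Y}_{ij*}$ ensures that these level-wise adjunctions glue through the colimit presentation, yielding $f_*\simeq f_!\dashv f^!$ on the ind-level. For (3), if $f$ is an open immersion, each $f_i$ is an open immersion of Artin stacks, for which the well-known identity $f_i^!\simeq f_i^*$ holds (via smooth descent from the scheme case). The analogous base change compatibility against the $t_{ij}$ promotes this identification, through Proposition~\ref{prop-shriek} and the equivalence (\ref{eq.colim}), to an equivalence $f^!\simeq f^*$ on the ind-level.

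The main obstacle I anticipate is organizing these Beck--Chevalley/base change verifications cleanly through the colimit-of-presentable-categories formalism: although each individual input (proper base change, closed immersions satisfying $t_{*}\simeq t_{!}$, open immersions satisfying $f^!\simeq f^*$) is known at the level of Artin stacks locally of finite type, threading these compatibilities through the equivalence $\colim_{t^{X}_{ij!}}\DM^!(X_i)\simeq\colim_{t^{X}_{ij*}}\DM^*(X_i)$ and matching adjunctions on both sides requires careful bookkeeping in $\PrStL$.
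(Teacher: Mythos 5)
Your construction of $f_{*}$ and $f^{*}$ via the colimit presentation along closed immersions, proper base change for the Beck--Chevalley squares, and \cite[Lem.~3.9]{MotPro} is precisely the argument the paper gives in the discussion preceding the proposition, and your reductions of (2) and (3) to the level-wise statements (equivalently, to the identity $f^{!}\simeq\varphi_{Y}^{-1}f^{*}\varphi_{X}$) match the paper's proof, which simply cites that discussion. Your additional elaboration of compact generation by the objects $t^{X}_{i*}c_{i}$ is a correct filling-in of a point the paper leaves implicit.
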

 \begin{proof}
 	See discussion above for $1$. The discussion above and (\ref{eq.f!}) yield the remaining points.
\end{proof}
%
 
 \begin{rem}
Since every compact on an ind-Artin stack comes from some finite level, we can prove localization sequence for ind-Artin stacks without any finiteness assumption. Under finiteness assumptions or using \cite[App. C]{Hoyois}, we obtain also a lower $!$-functor and can prove the $!$/$*$-exchange equivalence and projection formula similarly. Moreover, analogously to the proof of \cite[Thm. 2.4.2]{RS1} we have an adjunction $\otimes\dashv \Homline$. In fact, by our methods we obtain all the six functors for any morphism of ind-Artin stacks. This was previously not known, as far as the author knows.  However as remarked in \textit{loc. cit.} in general there is no monoidal unit in $\DM^{!}(X)$ for and ind-Artin stack $X$ (in fact not even if $X$ is representable by an ind-scheme). 
 \end{rem}

 Now assume that there exists a pro-algebraic group $G=\lim_{n\in \NN} G_{n}$ over $k$ with split pro-unipotent kernel acting on an ind-scheme $X= \colim_{i}X_{i}$ (not necessarily of ind-(finite-type)) over $k$.  Then let us look at the \'etale stack quotient $p\colon \quot{X/G}\to \Spec(k)$. The quotient map $X\to \quot{X/G}$ is an effective epimorphism in the \'etale topology. Thus, by descent we can compute 
 \[
 \begin{tikzcd}
 	 \DM^{!}(X/G)\simeq \lim (\DM(X)\arrow[r,"",shift right= 0.3em] \arrow[r,"",shift left= 0.3em] &\DM^{!}(X\times G) \arrow[r,"",shift right= 0.5em]\arrow[r,""] \arrow[r,"",shift left= 0.5em]&\dots ).
 \end{tikzcd}
 \] 
 via the limit along the \v{C}ech nerve of $X\to \quot{X/G}$. To construct the adjunction $p^{*}\dashv p_{*}$ and the localization sequence it is enough to glue the functors along the \v{C}ech nerve. We have already discussed that on $\DM(X)$ there is already a $*$-adjunction. So let us handle the $\DM(X\times G^{m})$ for $m\in \NN$. We have 
\begin{equation}
\label{eq.DM!}
	\DM^{!}(X\times G^{m})\simeq \colim_{(t_{ij}\times \id_{G^{m}})_{!}}\DM^{!}(X_{i}\times G^{m})
\end{equation}
noting that $G$ is in fact a faithfully flat group scheme over $k$, so $t_{ij}\times \id_{G^{m}}$ is a closed immersion and its $!$-pushforward makes sense. Since $G$ has split pro-unipotent kernel it is in particular classical and tame by our definition. Therefore, we have 
\begin{equation}
\label{eq.DM*}
	\DM^{!}(X_{i}\times G^{m})\simeq \colim_{n\in \NN,p_{n}^{!}}\DM^{!}(X_{i}\times G^{m}_{n})\simeq \colim_{n\in \NN,p_{n}^{*}}\DM^{*}(X_{i}\times G^{m}_{n})
\end{equation}
by Proposition \ref{prop-shriek} and \ref{prop-classical}, where $p_{n}\colon G^{m}_{n+1}\to G^{m}_{n}$ denotes the canonical projection. Similar to our discussion on the $*$-adjunction on ind-Artin stacks, equivalences (\ref{eq.DM!}) and (\ref{eq.DM*}) yield an adjunction $p^{*}\dashv p_{*}$ and a localization sequence, allowing us to compute $M^{c}(\GSht)$ later on.

\begin{lem}
\label{lem.loc.seq}
	Let $X$ be an ind-scheme over $k$ and $i\colon Y\hookrightarrow X$ a closed immersion with complement $j\colon U\hookrightarrow X$. Moreover, let $G$ be a  pro-algebraic group with split pro-unipotent kernel acting on $X$. Assume further that the action of $G$ restricts to an action on $Y$ and $U$ such that $i$ and $j$ are $G$-equivariant. Let $p^{X}\colon X\rightarrow \Spec(k)$ denote the structure map and similarly for $Y$ and $U$ we denote by $p^{Y}$ and $p^{U}$ the structure maps. Then for $M\in\DM(k)$ the following sequence
	\[
		p^{Y}_{*}p^{Y!}M\to p^{X}_{*}p^{X!}M\to p^{U}_{*}p^{U!}M
	\]
	is a fiber sequence for any $\DM(k)$.
\end{lem}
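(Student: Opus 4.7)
The plan is to deduce the fiber sequence by applying the standard recollement cofiber triangle $i_*i^! \to \id \to j_*j^!$ on $\DM^!([X/G])$ to the object $p^{X!}M$, then pushing forward along $p^X_{*}$. Combined with the identifications $p^X_* \circ i_* \simeq p^Y_*$ and $i^! \circ p^{X!} \simeq p^{Y!}$ (and analogues for $j$), which simply encode the factorizations $p^Y = p^X \circ i$ and $p^U = p^X \circ j$, this will directly yield the asserted triangle $p^Y_*p^{Y!}M \to p^X_*p^{X!}M \to p^U_*p^{U!}M$.

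The substantive content is therefore the existence of the localization cofiber sequence on $\DM^!([X/G])$. Following the strategy already used above for the $\ast$-adjunction, I would proceed by \'etale descent along the atlas $X \to [X/G]$: by construction $\DM^!([X/G]) \simeq \lim_{[m] \in \Delta} \DM^!(X \times G^m)$, with the same for $Y$ and $U$. Since limits preserve fiber sequences, it suffices to produce the localization cofiber sequence on each $\DM^!(X \times G^m)$ for the $G$-equivariant closed immersion $Y \times G^m \hookrightarrow X \times G^m$ with open complement $U \times G^m$, compatibly in $m$.

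For fixed $m$, the equivalences (\ref{eq.DM!}) and (\ref{eq.DM*}) provide a presentation
\[
\DM^!(X \times G^m) \simeq \colim_{i,\,(t_{ij} \times \id)_{!}} \colim_{n,\,p_n^{\ast}} \DM^{\ast}(X_i \times G_n^m),
\]
where each $X_i \times G_n^m$ is a $k$-scheme of finite type on which the classical localization cofiber sequence $i_{!}i^{!} \to \id \to j_{*}j^{\ast}$ is available. The stratification $Y \hookrightarrow X \hookleftarrow U$ restricts to the $X_i$, and is preserved by products with $G_n^m$; proper base change for $i_{!}$ along the closed immersions $t_{ij} \times \id$ and smooth base change for $i^{!}$ and $j^{\ast}$ along the projections $p_n \times \id$ ensure that the localization triangle is stable under the transition functors. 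Passing to the double colimit yields the localization cofiber sequence on $\DM^!(X \times G^m)$; the simplicial structure of the \v{C}ech nerve then assembles these into a limit diagram whose limit produces the triangle on $\DM^!([X/G])$.

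The main obstacle is the coherent bookkeeping of base-change isomorphisms: at each fixed level one must verify that the six recollement functors commute (via natural exchange equivalences) with the transition functors $t_{ij!}$, $p_n^{\ast}$, and the face/degeneracy maps of $X \times G^{\bullet}$. Each of these is a standard proper or smooth base change between closed/open immersions and the transitions, but they need to glue into an $\infty$-categorical natural transformation of cofiber sequences. Once this is in place, the colimit-of-cofibers and limit-of-fibers arguments are formal, and evaluation on $p^{X!}M$ followed by $p^X_{\ast}$ finishes the proof.
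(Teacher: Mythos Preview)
Your proposal is correct and follows essentially the same route as the paper: the proof in the paper is literally ``See discussion above,'' and that discussion is precisely what you describe---descend along the \v{C}ech nerve of $X\to [X/G]$, use the presentations (\ref{eq.DM!}) and (\ref{eq.DM*}) to reduce to finite-type schemes where the classical localization triangle holds, and glue via the relevant base-change equivalences. Your write-up is in fact more explicit than the paper's about the coherence bookkeeping, but the underlying argument is the same.
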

\begin{proof}
	See discussion above.
\end{proof}

\section{Motives on local $G$-shtukas}
 
 Before continuing with the proof of the main theorem let us a conceptual lemma addressing the potential ambiguity between the definition of $\DM$ for prestacks via Kan extensions and $\DM$ for pro-algebraic stacks via colimits.

\begin{lem}
\label{lem-mot-kan}
	Let $S$ be a scheme.
 	We fix a pro-system $X_{\bullet}\colon \NN^{\op}\to \Sch^{\mathrm{lft}}_{S}$ of locally of finite $S$-schemes. Moreover, we assume that each $X_{m}\to X_{n}$ for $m\geq n$ is an affine bundle. 
	By definition, we set $X_{\infty}\coloneqq \lim_{n}X_{n}$.
	
	We further fix a pro-algebraic group scheme $G_{\infty} = \lim_{n} G_{n}$ with split pro-unipotent kernel such that each $G_{n}$ acts on $X_{n}$ and the transition maps $X_{m}\to X_{n}$ are equivariant with respect to $G_{m}\to G_{n}$. 
	
	The action of $G_{\bullet}$ on $X_{\bullet}$ yields a pro-algebraic stack $(x,\quot{X_{\infty}/G_{\infty}})$.
	
	\medskip
	
	The canonical map $\DM^{*}(x,\quot{X_{\infty}/G_{\infty}})\rightarrow  \DM^{!}(\quot{X_{\infty}/G_{\infty}})$ is an equivalence.
\end{lem}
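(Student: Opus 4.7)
The plan is to identify both sides of the canonical map with the common colimit $\colim_n \DM^!(\quot{X_n/G_n})$, taken along $!$-pullbacks of the projections $\pi_n \colon \quot{X_\infty/G_\infty} \to \quot{X_n/G_n}$, and then to verify that the resulting equivalence agrees with the canonical comparison map.

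I would first check that $(x, \quot{X_\infty/G_\infty})$ is strict and tame, so that Proposition \ref{prop-shriek} applies. Each transition $\quot{X_m/G_m} \to \quot{X_n/G_n}$ factors as the affine-bundle map $\quot{X_m/G_m} \to \quot{X_n/G_m}$ (induced by $X_m \to X_n$) followed by the $B\ker(G_m \to G_n)$-gerbe $\quot{X_n/G_m} \to \quot{X_n/G_n}$. Both factors are smooth by the affine-bundle and split-pro-unipotent-kernel hypotheses, so the system is tame; strictness then follows from $\AA^{1}$-invariance exactly as in the proof of Lemma \ref{lem-strict}. Proposition \ref{prop-shriek} then yields
\[
\DM^*(x, \quot{X_\infty/G_\infty}) \simeq \colim_{n, x^!} \DM^!(\quot{X_n/G_n}).
\]

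To compute the right-hand side, I would apply smooth descent along the atlas $X_\infty \to \quot{X_\infty/G_\infty}$ (noting that $X_\infty$ is a scheme as an affine limit), giving $\DM^!(\quot{X_\infty/G_\infty}) \simeq \lim_{[m] \in \Delta} \DM^!(X_\infty \times G_\infty^m)$. Each $X_\infty \times G_\infty^m$ is a limit of finite-type schemes $X_n \times G_n^m$ along affine transitions (affine-bundle maps on the $X$-factor and vector-group bundles on the $G$-factors), so Proposition \ref{prop-classical}, combined with Proposition \ref{prop-shriek} applied to this strict tame pro-system and with the canonical identification $\varphi$ from \eqref{eq.f!} at each finite level, yields
\[
\DM^!(X_\infty \times G_\infty^m) \simeq \colim_n \DM^!(X_n \times G_n^m).
\]
Separately, smooth descent on each finite-type stack $\quot{X_n/G_n}$ identifies $\lim_{[m]} \DM^!(X_n \times G_n^m) \simeq \DM^!(\quot{X_n/G_n})$.

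The proof then reduces to the interchange
\[
\lim_{[m] \in \Delta} \colim_n \DM^!(X_n \times G_n^m) \simeq \colim_n \lim_{[m] \in \Delta} \DM^!(X_n \times G_n^m),
\]
which is the main obstacle, since filtered colimits do not commute with arbitrary limits in $\mathrm{Pr}^L$ in general. My plan for this step is to pass to compact objects: strictness of the pro-system in the $n$-direction ensures that the colimits are along fully faithful functors preserving compact objects, so every compact object descends from a finite level, while the face and degeneracy maps in the simplicial direction preserve the pro-level and commute with $!$-pullback along transitions. This reduces the interchange to commuting a simplicial limit with a filtered colimit of small $\infty$-categories of compact objects along compact-preserving functors, where it is standard. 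To conclude, I would verify that the resulting equivalence is the canonical map by tracing that a class $c_n \in \DM^!(\quot{X_n/G_n})$ is sent on both sides to its $!$-pullback $\pi_n^! c_n \in \DM^!(\quot{X_\infty/G_\infty})$, with compatibility in $n$ clear from the construction.
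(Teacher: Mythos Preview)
Your argument follows essentially the same route as the paper's: apply Proposition \ref{prop-shriek} to rewrite $\DM^*(x,\quot{X_\infty/G_\infty})$ as a $!$-colimit over $n$, then compare with $\DM^!(\quot{X_\infty/G_\infty})$ by descending along the atlas and identifying each level of the \v{C}ech nerve via continuity; this is precisely what the paper defers to \cite[Prop.~4.15]{MotPro}.

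One point deserves tightening. Your justification of the $\lim_{[m]}$--$\colim_n$ interchange via passage to compact objects is not quite ``standard'' as stated: totalizations do not commute with filtered colimits in $\mathrm{Cat}_\infty$ in general, and the observation that compact objects descend to a finite level does not by itself force the interchange. In the present situation there is a cleaner fix already implicit in your strictness verification: under the affine-bundle and split-pro-unipotent-kernel hypotheses the transition functors $x_{nm}^{!}$ (equivalently $x_{nm}^{*}$, up to twist) are not merely fully faithful but \emph{equivalences}, so the filtered system in the $n$-direction is essentially constant and the interchange is trivial. Alternatively, one may rewrite the $\mathrm{Pr}^{L}$-colimit over $n$ as the limit along the right adjoints $x_{nm!}$ and then commute two limits, the relevant Beck--Chevalley squares being instances of smooth base change.
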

\begin{proof}
	First note that $\DM^{*}(x,\quot{X_{n}/G_{n}})\simeq \DM^{!}(x,\quot{X_{n}/G_{n}})$ by Proposition \ref{prop-shriek}. The rest of the proof is completely analogous to the proof of \cite[Prop. 4.15]{MotPro}
\end{proof}

 \subsection{Application to the stack of local $G$-shtukas}
 
We will use the notation of Section \ref{sec-GSht}: Let $k=\FF_{q}$ be a finite field of characteristic $p>0$. Let $G$ be a split reductive group scheme over $k[\![z]\!]$ with split maximal torus $T$. 

In this section, we want to show that the compactly supported cohomology of $\GSht$ can be computed by the compactly supported cohomology of $\Zip{G}$, which equivalently is given by the Chow group of $\Zip{G}$ (since $\Zip{G}$ is smooth and $0$-dimensional). 

First, we show the statement for $\nSht{1}_{G}$. Then we will use the presentation of $\nSht{\infty}_{G}^{\mu}$ as a pro-algebraic stack to show that $M^{c}(\nSht{\infty}_{G}^{\mu})\simeq M^{c}(\Zip{G}^{\mu})$. Afterwards, we will compute $M^{c}(\GSht)$ using the stratification $\coprod_{\mu} \GSht^{\mu}\rightarrow \GSht$ using that \'etale locally on $k$, we have $\GSht^{\mu}\simeq \nSht{\infty}_{G}^{\mu}$.

\begin{rem}
	Let us note that $M^{c}(\Zip{G}^{\mu})$ is a Tate motive in $\DM(k)$ \cite[Ex. 5.6]{MotGZip}, i.e. contained in the full stable cocomplete subcategory of $\DM(k)$ generated by Tate twists $1_{k}(n)$. In our specific situation it is known that the subcategory of Tate motives admits a $t$-structure realizing to the perverse $t$-structure on $\ell$-adic cohomology \cite{Levine-t}. 
\end{rem}

\begin{lem}
\label{lem-zip-sht}
	For all $\mu\in X_{*}(T)_{+}$ the $*$-pullback functor 
	$$
		\DM^{*}(\nSht{1}_{G}^{\mu})\rightarrow \DM^{*}(\Zip{G}^{\mu})
	$$ 
	is an equivalence. In particular, there exists a fully faithful functor $\DM^{*}(\Zip{G}^{\mu})\rightarrow \DM^{*}(\tau,\nSht{\infty}_{G}^{\mu})$.
\end{lem}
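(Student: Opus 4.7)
The plan is to exploit Lemma \ref{lem.univ.hom}, which supplies a universal homeomorphism of group schemes $E_{1}(G,\mu)\to E_{\mu}$ compatible with the respective actions on $G$ and inducing $\phi$. Passing to the canonical smooth covers $G\to\nSht{1}_{G}^{\mu}$ and $G\to\Zip{G}^{\mu}$ together with their \v{C}ech nerves, \'etale descent for $\DM^{*}$ identifies $\DM^{*}(\nSht{1}_{G}^{\mu})$ with $\lim_{[n]\in\Delta}\DM^{*}(G\times E_{1}(G,\mu)^{n})$, and analogously for $\Zip{G}^{\mu}$ with $E_{\mu}$ in place of $E_{1}(G,\mu)$. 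This reduces the first claim to a level-wise comparison on schemes.

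Next, I would observe that each comparison map $G\times E_{1}(G,\mu)^{n}\to G\times E_{\mu}^{n}$ is again a universal homeomorphism, since the property is stable under products with the identity; in fact the relative Frobenius factorisation used in the proof of Lemma \ref{lem.univ.hom} exhibits these maps as finite radicial surjective morphisms. For rational Beilinson motives, $*$-pullback along such morphisms is an equivalence, a standard consequence of $\mathrm{h}$-descent (universal homeomorphisms being $\mathrm{h}$-covers with common $\mathrm{h}$-sheafification on source and target). Assembling these level-wise equivalences through the \v{C}ech limit yields the desired equivalence $\phi^{*}\colon\DM^{*}(\Zip{G}^{\mu})\xrightarrow{\sim}\DM^{*}(\nSht{1}_{G}^{\mu})$.

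For the in-particular statement, I would invoke Lemma \ref{lem-strict}: the pro-algebraic stack $(\tau,\nSht{\infty}_{G}^{\mu})$ is strict tame, so by definition the transition functors $x_{NM}^{*}\colon\DM^{*}(\nSht{N}_{G}^{\mu})\to\DM^{*}(\nSht{M}_{G}^{\mu})$ are fully faithful for $N\leq M$. Since $\DM^{*}(\tau,\nSht{\infty}_{G}^{\mu})$ is by construction the filtered colimit $\colim_{N}\DM^{*}(\nSht{N}_{G}^{\mu})$ in $\mathrm{Mod}_{\Dcal(\QQ)}(\Pr^{L})$, and in a filtered colimit along fully faithful colimit-preserving functors the canonical functor from each stage into the colimit is itself fully faithful, the insertion $\DM^{*}(\nSht{1}_{G}^{\mu})\hookrightarrow\DM^{*}(\tau,\nSht{\infty}_{G}^{\mu})$ is fully faithful. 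Composing it with $\phi^{*}$ produces the asserted fully faithful embedding.

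The main obstacle I anticipate is the bookkeeping of topological invariance of rational $\DM$ through the stack-quotient formalism: the scheme-level input is standard, but one must verify that the two smooth atlases and their \v{C}ech nerves are genuinely compatible under the group homomorphism of Lemma \ref{lem.univ.hom}, which is precisely why that lemma strengthens \cite[Cor.~6.27]{VW} from a homeomorphism of stacks to a homomorphism of the presenting group schemes. Everything else is formal once this descent comparison is in place.
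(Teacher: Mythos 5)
Your proposal is correct and follows essentially the same route as the paper: it uses Lemma \ref{lem.univ.hom} to produce a levelwise universal homeomorphism of \v{C}ech nerves, invokes topological invariance of rational motives (the paper cites qfh-descent, you cite h-descent — the same underlying fact) to get levelwise equivalences, and passes to the limit over $\Delta$. Your explicit treatment of the ``in particular'' clause via strictness of $(\tau,\nSht{\infty}_{G}^{\mu})$ and full faithfulness of insertions into a filtered colimit along fully faithful left adjoints in $\Pr^{L}$ is exactly the argument the paper leaves implicit.
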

\begin{proof}
Recall that by Lemma \ref{lem.univ.hom} there exists a universal homeomorphism $\phi\colon \nSht{1}_{G}^{\mu}\to \Zip{G}^{\mu}$ that is compatible with the \v{C}ech nerve of the canonical atlas given by $G$ on both sides.
	 Therefore it induces a map of cosimplicial diagrams
	$$
		\begin{tikzcd}
			 \dots\arrow[r,""]\arrow[d,"",shift left = 0.5em]\arrow[d,"",shift right = 0.5em]\arrow[d,""]&\dots\arrow[d,"",shift left = 0.5em]\arrow[d,"",shift right = 0.5em]\arrow[d,""]\\
			 E_{1}(G,\mu)\times G \arrow[r,""]\arrow[d,"",shift left = 0.3em]\arrow[d,"",shift right = 0.3em]& E_{\mu}\times G\arrow[d,"",shift left = 0.3em]\arrow[d,"",shift right = 0.3em]\\
			 G\arrow[r,"\id_{G}"]& G,
		\end{tikzcd}
	$$
	which on each level is a universal homeomorphism. This induces a map of simplicial diagrams
		$$
		\begin{tikzcd}
			 \dots&\dots\arrow[l,""]\\
			 \DM^{*}(E_{1}(G,\mu)\times G)\arrow[u,"",shift left = 0.5em]\arrow[u,"",shift right = 0.5em]\arrow[u,""]& \DM^{*}(E_{\mu}\times G)\arrow[l,""]\arrow[u,"",shift left = 0.5em]\arrow[u,"",shift right = 0.5em]\arrow[u,""]\\
			 \DM^{*}(G)\arrow[u,"",shift left = 0.3em]\arrow[u,"",shift right = 0.3em]& \DM^{*}(G)\arrow[u,"",shift left = 0.3em]\arrow[u,"",shift right = 0.3em]\arrow[l,""],
		\end{tikzcd}
	$$
	that is an equivalence on each level by qfh-descent \cite[Prop. 3.2.5]{Voev}. Hence, we get an equivalence on the limit 
	$$
		\phi^{*}\colon \DM^{*}(\Zip{G}^{\mu})\rightarrow\DM^{*}(\nSht{1}_{G}^{\mu})
	$$
	\cite[Thm. 2.2.16]{RS1}.
\end{proof}

\begin{lem}
\label{lem-infty-decomp}
	The motive $M^{c}(\tau,\nSht{\infty}_{G}^{\mu})$ is equivalent to $M^{c}(\Zip{G}^{\mu})$.	Moreover, we have 
	$$
		M^{c}(\tau,\nSht{\infty}_{G})\simeq \bigoplus_{\mu\in X_{*}(T)_{+}} M^{c}(\Zip{G}^{\mu}).
	$$	
\end{lem}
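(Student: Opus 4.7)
The plan is to factor the structure map through level $1$ and exploit the strict tameness of the pro-system from Lemma~\ref{lem-strict}, combined with the observation that each level transition is smooth of relative dimension zero. The second statement reduces to the first: by Remark~\ref{rem-Sht-coprod} the decomposition $\nSht{N}_{G}\simeq\coprod_{\mu}\nSht{N}_{G}^{\mu}$ is compatible with the pro-structure, yielding $(\tau,\nSht{\infty}_{G})\simeq\coprod_{\mu}(\tau,\nSht{\infty}_{G}^{\mu})$ as pro-algebraic stacks, and $M^{c}$ is additive over the coproduct.

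For the first statement, I would factor $f_{\infty}\colon(\tau,\nSht{\infty}_{G}^{\mu})\to\Spec(k)$ as $f_{\infty}=f_{1}\circ\tau_{1,\infty}$, where $\tau_{1,\infty}\colon(\tau,\nSht{\infty}_{G}^{\mu})\to\nSht{1}_{G}^{\mu}$ is the projection to level $1$. Then
\[
M^{c}(\tau,\nSht{\infty}_{G}^{\mu})=f_{1*}\,\tau_{1,\infty*}\tau_{1,\infty}^{!}\,f_{1}^{!}1_{k},
\]
so it suffices to prove $\tau_{1,\infty*}\tau_{1,\infty}^{!}\simeq\id$. Combined with Lemma~\ref{lem-zip-sht} and the fact that the universal homeomorphism $\phi$ of Lemma~\ref{lem.univ.hom} induces $\phi_{*}\simeq\phi_{!}$ in the rational setting (via qfh-descent as in the proof of Lemma~\ref{lem-zip-sht}), this yields $M^{c}(\tau,\nSht{\infty}_{G}^{\mu})\simeq M^{c}(\nSht{1}_{G}^{\mu})\simeq M^{c}(\Zip{G}^{\mu})$.

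The key input is the vanishing of the relative dimension of each $\tau_{N}\colon\nSht{N+1}_{G}^{\mu}\to\nSht{N}_{G}^{\mu}$. Decomposing $\tau_{N}$ as
\[
\quot{L^{(N+1)}G/E_{N+1}(G,\mu)}\to\quot{L^{(N)}G/E_{N+1}(G,\mu)}\to\quot{L^{(N)}G/E_{N}(G,\mu)},
\]
the first map is a torsor under $\ker(L^{(N+1)}G\to L^{(N)}G)$, a vector group of dimension $\dim G$, while the second is a $BV$-bundle with $V=\ker(E_{N+1}(G,\mu)\to E_{N}(G,\mu))$, also a vector group of dimension $\dim G$ (forced by the fact that $\dim\nSht{N}_{G}^{\mu}=0$, cf.~\cite{VW} and the proof of Lemma~\ref{lem-strict}). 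These contributions cancel, so $\tau_{N}$ is smooth of relative dimension $0$, and by purity $\tau_{N}^{!}\simeq\tau_{N}^{*}$ levelwise.

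To propagate to the pro-limit, strictness ensures that $\tau_{1,\infty}^{*}$ is the fully faithful insertion at level $1$ of the $*$-colimit description of $\DM^{*}(\tau,\nSht{\infty}_{G}^{\mu})$, so $\tau_{1,\infty*}\tau_{1,\infty}^{*}\simeq\id$. Proposition~\ref{prop-shriek} identifies the $*$- and $!$-colimit descriptions, and the levelwise identifications $\tau_{N}^{!}\simeq\tau_{N}^{*}$, together with the fact that the base $\nSht{1}_{G}^{\mu}$ is $0$-dimensional so that $\DM^{*}(\nSht{1}_{G}^{\mu})\simeq\DM^{!}(\nSht{1}_{G}^{\mu})$ canonically, assemble to give $\tau_{1,\infty}^{!}\simeq\tau_{1,\infty}^{*}$, whence $\tau_{1,\infty*}\tau_{1,\infty}^{!}\simeq\id$. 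The main obstacle, and the step requiring the most care, is precisely this identification of the $*$- and $!$-insertions in the pro-limit; one must trace through the construction of the equivalence in Proposition~\ref{prop-shriek} and verify compatibility with the level-$1$ insertions, using the relative-dimension-zero property to ensure that the purity twists telescope trivially.
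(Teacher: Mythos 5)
Your proof is correct and follows essentially the same route as the paper: decompose over $\mu$ via Remark \ref{rem-Sht-coprod}, reduce to level $1$ of the pro-system using strictness (Lemma \ref{lem-strict}), and transport along the universal homeomorphism to $\Zip{G}^{\mu}$ via Lemma \ref{lem-zip-sht}. The only difference is that you unpack by hand --- through the relative-dimension-zero observation and the comparison of $*$- and $!$-insertions --- the step that the paper delegates to \cite[Lem.~4.4]{MotPro}, namely that $M^{c}$ of a strict tame pro-algebraic stack is computed at level $1$.
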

\begin{proof}
	By Remark \ref{rem-Sht-coprod}, we have
	$$
		M^{c}(\nSht{1}_{G})\simeq \bigoplus_{\mu\in X_{*}(T)_{+}}M^{c}(\nSht{1}_{G}^{\mu}).
	$$
	Now the result follows from Lemma \ref{lem-strict}, Lemma \ref{lem-mot-kan}, Lemma \ref{lem-zip-sht} and \cite[Lem. 4.4]{MotPro}.
\end{proof}

\begin{thm}
\label{thm.main}
	Let $\mu\in X_{*}(T)_{+}$, then 
	$$
		M^{c}(\GSht^{\leq\mu}) \simeq \bigoplus_{\mu'\leq \mu} M^{c}(\Zip{G}^{\mu'}) .
	$$
	In particular, we have 
	$$
		M^{c}(\GSht) \simeq M^{c}(\nSht{\infty}_{G}).
	$$
\end{thm}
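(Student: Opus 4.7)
The plan is to establish the decomposition of $M^{c}(\GSht^{\leq\mu})$ by induction on $\mu$ along the Bruhat order, using the localization sequence of Lemma~\ref{lem.loc.seq} applied iteratively to the Bruhat stratification of $LG_{\leq\mu}$. The concluding identification $M^{c}(\GSht)\simeq M^{c}(\nSht{\infty}_{G})$ will then follow by passing to the filtered colimit over $\mu\in X_{*}(T)_{+}$.

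The first step is to identify the motive of each stratum: $M^{c}(\GSht^{\mu})\simeq M^{c}(\Zip{G}^{\mu})$. By Remark~\ref{rem-infty}, the canonical map $\nSht{\infty}_{G}\to\GSht$ restricts to an equivalence $\nSht{\infty}_{G}^{\mu}\simeq\GSht^{\mu}$. The pro-algebraic presentation $(\tau,\nSht{\infty}_{G}^{\mu})$ is strict tame by Lemma~\ref{lem-strict}, and in particular the hypotheses of Lemma~\ref{lem-mot-kan} are met, yielding an equivalence $\DM^{*}(\tau,\nSht{\infty}_{G}^{\mu})\simeq \DM^{!}(\GSht^{\mu})$ under which the two candidate motives $M^{c}$ agree. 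Combined with Lemma~\ref{lem-infty-decomp}, this gives the desired identification on each stratum.

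For the inductive step, the Bruhat stratification provides an $L^{+}G$-equivariant open–closed decomposition $LG_{<\mu}\hookrightarrow LG_{\leq\mu}\hookleftarrow LG_{\mu}$. Passing to $\Ad_{\sigma}L^{+}G$-quotients (which is permissible since $L^{+}G$ has split pro-unipotent kernel) and applying Lemma~\ref{lem.loc.seq} yields a fiber sequence
\[
M^{c}(\GSht^{<\mu})\to M^{c}(\GSht^{\leq\mu})\to M^{c}(\GSht^{\mu}).
\]
Applying the induction hypothesis iteratively to the finitely many maximal strata $\mu'<\mu$ identifies $M^{c}(\GSht^{<\mu})\simeq\bigoplus_{\mu'<\mu}M^{c}(\Zip{G}^{\mu'})$, while the first step identifies the right-hand term with $M^{c}(\Zip{G}^{\mu})$.

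The main obstacle is splitting this fiber sequence. All three terms are Tate motives by \cite{MotGZip}, and because $\Zip{G}^{\mu}$ is smooth of dimension zero, $M^{c}(\Zip{G}^{\mu})$ is pure of a controlled weight. Using the Tate $t$-structure of \cite{Levine-t} together with the explicit description of $M^{c}(\Zip{G}^{\mu})$, the connecting morphism lives in an Ext group between pure Tate motives whose weights and degrees force it to vanish; granted this, we obtain $M^{c}(\GSht^{\leq\mu})\simeq\bigoplus_{\mu'\leq\mu}M^{c}(\Zip{G}^{\mu'})$. The ``in particular'' statement then follows by comparing $M^{c}(\GSht)\simeq\colim_{\mu}M^{c}(\GSht^{\leq\mu})\simeq\bigoplus_{\mu}M^{c}(\Zip{G}^{\mu})$ with the analogous computation $M^{c}(\nSht{\infty}_{G})\simeq\bigoplus_{\mu}M^{c}(\Zip{G}^{\mu})$ obtained from the coproduct decomposition of Remark~\ref{rem-Sht-coprod} and Lemma~\ref{lem-infty-decomp}.
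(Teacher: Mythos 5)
Your overall architecture matches the paper's: stratify $\GSht^{\leq\mu}$ by the $\nSht{\infty}_{G}^{\mu'}$, identify each stratum's motive with $M^{c}(\Zip{G}^{\mu'})$ via Lemmas \ref{lem-strict}, \ref{lem-mot-kan}, \ref{lem-zip-sht} and \ref{lem-infty-decomp}, run an induction along the dominance order using the localization sequence of Lemma \ref{lem.loc.seq}, and conclude by splitting the resulting fiber sequences. (The paper organizes the induction by a length function and peels off all strata of a given length at once, which avoids having to treat $\GSht^{<\mu}$ --- a finite union of Schubert closures that is generally not itself some $\GSht^{\leq\mu'}$ --- but this is a cosmetic difference.)

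The one genuine gap is the splitting step, which is the crux of the theorem and which you explicitly defer (``granted this''). Your stated reason --- that $M^{c}(\Zip{G}^{\mu})$ is ``pure of a controlled weight'' because $\Zip{G}^{\mu}$ is smooth of dimension zero --- is not correct as stated: $\Zip{G}^{\mu}$ is a quotient stack, so $M^{c}(\Zip{G}^{\mu})\simeq f_{*}1$ is an infinite direct sum of Tate twists of unbounded weight (as for a classifying space), not a pure motive. The vanishing you need is
$$
  \Hom\bigl(M^{c}(\Zip{G}^{\mu}),\,M^{c}(\Zip{G}^{\mu'})[1]\bigr)=0,
$$
and the paper closes this concretely: writing $M^{c}(\Zip{G}^{\mu'})\simeq f_{*}1$ and using smooth base change, the $\Hom$-group is identified with $A^{0}(\Zip{G}^{\mu}\times\Zip{G}^{\mu'},-1)$, a higher equivariant Chow group with negative simplicial index, which vanishes by construction. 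Your instinct that the Ext group dies for degree/weight reasons over $\FF_{q}$ is sound --- the twists and shifts occurring in both motives satisfy the relation forcing any $\Hom(1(a)[2a],1(c)[2c+1])$ to be a rational motivic cohomology group of $\FF_{q}$ in a range where it vanishes --- but this computation has to be carried out (or replaced by the Chow-group argument); as written, the decisive step of the proof is asserted rather than proved.
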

\begin{proof}
Let us consider the stratification 
$$
	\coprod_{\mu\in X_{*}(T)_{+}}\nSht{\infty}_{G}^{\mu}\rightarrow \GSht
$$
obtained via the discussion in Section \ref{sec-GSht} and Remark \ref{rem-infty}.
	On $X_{*}(T)_{+}$ there exists a length function $l\colon X_{*}(T)_{+}\rightarrow \ZZ_{\geq 0}$ inducing the dominance order \cite[Ex. 4.2.16]{RS1}. We will prove the result by induction over $l(\mu)$. For $l(\mu) = 0$ the result holds by design. Now let $l(\mu) = n$ be arbitrary. We have a closed immersion 
	$$
		X_{n-1}\coloneqq\GSht^{\leq n-1}\subseteq X_{n}\coloneqq \GSht^{\leq n}
	$$
	with open complement $U\coloneqq \coprod_{l(\mu) = n}\nSht{\infty}_{G}^{\mu}$. By Lemma \ref{lem.loc.seq}, we have a fiber sequence
	$$
		M^{c}(X_{n-1})\rightarrow M^{c}(X_{n})\rightarrow M^{c}(U). 
	$$
	By induction it suffices to show that the fiber sequence above splits. Thus, we are reduced to show that 
	$$
		\Hom(M^{c}(\nSht{\infty}_{G}^{\mu}), M^{c}(\nSht{\infty}_{G}^{\mu'})[1]) = 0.
	$$
	for all $\mu,\mu'\in X_{*}(T)_{+}$ such that $l(\mu) = n$ and $l(\mu')<n$.  By Lemma \ref{lem-zip-sht} and Lemma \ref{lem-mot-kan}, we have
	$$
		\Hom(M^{c}(\nSht{\infty}_{G}^{\mu}), M^{c}(\nSht{\infty}_{G}^{\mu'})[1]) = \Hom(M^{c}(\Zip{G}^{\mu}), M^{c}(\Zip{G}^{\mu'})[1]).
	$$
	As $\Zip{G}^{\mu'}$ is smooth of dimension $0$, we obtain $M^{c}(\Zip{G}^{\mu'})\simeq f_{*}1_{\Zip{G}^{\mu'}}$, where $f\colon \Zip{G}^{\mu'}\rightarrow \Spec(\FF_{q})$ is the structure map. Hence, by smooth base change it suffices to show that
	$$
		A^{0}(\Zip{G}^{\mu}\times \Zip{G}^{\mu'},-1) =0.
	$$
	But this follows from the construction of higher equivariant Chow-groups.
\end{proof}

\begin{cor}
	Assume that $G$ is not necessarily split. Then we have 
	\[
		M^{c}(\GSht)\simeq  M^{c}(\nSht{1}_{G}).
	\] 
\end{cor}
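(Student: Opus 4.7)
The plan is to reduce to the split case that was just proved via a finite étale Galois descent along a splitting field of $G$. Let me fix a finite Galois extension $k'/k$ over which $G_{k'}$ becomes split, and denote by $f\colon \Spec(k')\to\Spec(k)$ the corresponding morphism. The first thing I would check is that all relevant constructions ($LG$, $L^{+}G$, $E_{N}(G,\mu)$, the stacks $\Sht_{G}$, $\nSht{N}_{G}$, the canonical map $\nSht{\infty}_{G}\to\Sht_{G}$ of Remark \ref{rem-infty}, and the truncation $\nSht{\infty}_{G}\to\nSht{1}_{G}$) commute with base change; this is formal from the definitions. Thus one has natural maps over $k$ forming a zigzag
\[
M^{c}(\Sht_{G})\;\longleftarrow\; M^{c}(\tau,\nSht{\infty}_{G})\;\longrightarrow\; M^{c}(\nSht{1}_{G})
\]
whose base change to $k'$ becomes, by Theorem \ref{thm.main} applied to $G_{k'}$ (together with Lemma \ref{lem-zip-sht} and Lemma \ref{lem-infty-decomp}), the desired zigzag of equivalences over $k'$.

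Next I would invoke that the pullback $f^{*}\colon\DM(k)\to\DM(k')$ along the finite étale morphism $f$ is conservative. With $\QQ$-coefficients this is standard: for $k'/k$ finite Galois with group $\Gamma$, one has $f_{*}f^{*}N\simeq N\otimes_{\QQ}\QQ[\Gamma]$, which contains $N$ as a retract, so $f^{*}$ detects equivalences. Moreover, since $f$ is finite étale, $f^{!}\simeq f^{*}$ and $f_{*}\simeq f_{!}$, and via proper base change one obtains $f^{*}M^{c}_{k}(X)\simeq M^{c}_{k'}(X_{k'})$ for the stacks in question; in the pro-algebraic setting this is built from the compatibility with the colimit/limit definitions of $\DM^{*}(x,X)$.

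Combining these two ingredients, the zigzag above is sent by $f^{*}$ to the zigzag of equivalences over $k'$ coming from the split case; by conservativity each arrow is already an equivalence over $k$, and composing the inverse of the left map with the right map yields the claim.

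The main obstacle is not conceptual but notational: one must verify carefully that base change along $f$ commutes with $M^{c}$ for the ind-pro-Artin stacks $\Sht_{G}$ and $(\tau,\nSht{\infty}_{G})$, so that the $k'$-zigzag really is the image under $f^{*}$ of the $k$-zigzag. This reduces to the fact that $f^{*}\simeq f^{!}$ for étale $f$ (so that the constructions in Section \ref{sec-complement} are preserved) together with proper base change for the structure map; once this bookkeeping is set up the descent argument is immediate.
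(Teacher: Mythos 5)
Your proposal is correct and follows essentially the same route as the paper: the paper's proof reduces via strictness to showing $M^{c}(\nSht{\infty}_{G})\to M^{c}(\GSht)$ is an equivalence and then asserts that "this can be checked étale locally," which is precisely your Galois-descent argument (conservativity of $f^{*}$ for the finite étale splitting extension, plus compatibility of $M^{c}$ with base change). Your write-up simply makes explicit the bookkeeping that the paper leaves implicit.
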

\begin{proof}
	By Lemma \ref{lem-strict} it is enough to check that $M^{c}(\nSht{\infty}_{G})\to M^{c}(\GSht)$ is an equivalence. However, this can be checked \'etale locally, which follows from Lemma \ref{lem-infty-decomp} and Theorem \ref{thm.main}.
\end{proof}

\begin{rem}
	Note that if $G$ is non-split, it is still quasi-split. We can use all the constructions of \cite{VW} but we have to be careful with the stratifications by dominant cocharacters. A priori we have to work as in \textit{op.\! cit.} with conjugacy classes of cocharacters and obtain a stratification of $\GSht$ by $\nSht{\infty}_{G}^{\mu}$ only after passage to some split extension of $k$. Thus, the main obstruction existence of a stratification $\coprod \GSht^{\mu}\to \GSht$ which to our knowledge requires the existence of a split maximal torus. 
\end{rem}



\addcontentsline{toc}{section}{References}
\bibliographystyle{plainurl}
\bibliography{MotOnSht}

\end{document}